\newtheorem{thm}{Theorem}[section]
\newtheorem{prop}[thm]{Proposition}
\newtheorem{lem}[thm]{Lemma}
\theoremstyle{remark}
\newtheorem{rem}[thm]{Remark}
\newcommand{\GL}{\mathrm{GL}}
\title{Proof of Kitaev determinant trivialization conjecture}
\author{Guo Chuan Thiang}
\affil{Beijing International Center for Mathematical Research, Peking University, China}
\date{\today}
\begin{document}
\maketitle

\begin{abstract}
Using ideas from algebraic $K$-theory, we prove that a simple and naturally applicable criterion of Kitaev suffices to trivialize the Fredholm determinant of a multiplicative commutator.
\end{abstract}

\section{Introduction}\label{sec:introduction}
We write $\mathcal{L}\equiv \mathcal{L}(\mathcal{H})$ for the ring of bounded linear operators on a Hilbert space $\mathcal{H}$, and $\mathcal{L}^1\equiv \mathcal{L}^1(\mathcal{H})$ for the ideal of \emph{trace class} operators. The group of invertibles in $\mathcal{L}$ is denoted $\mathcal{L}^\times$, and $\mathbf{1}$ denotes the identity operator on $\mathcal{H}$.

The following Theorem is our main result.
\begin{thm}\label{thm:Kitaev.vanishing}
If $U,V\in\mathcal{L}^\times$ satisfy
\begin{equation}\label{eqn:Kitaev.class}
(U-\mathbf{1})(V-\mathbf{1})\in\mathcal{L}^1\quad\mathrm{and}\quad (V-\mathbf{1})(U-\mathbf{1})\quad \in\mathcal{L}^1,
\end{equation}
then $\det(UVU^{-1}V^{-1})=1$.
\end{thm}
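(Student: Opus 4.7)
The plan is to first verify that the Fredholm determinant is well-defined, and then to interpret the value as the image of a Steinberg symbol in relative algebraic $K$-theory, reducing the theorem to the vanishing of a single $K$-theoretic pairing.

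\textbf{Well-definedness.} The Kitaev hypothesis gives
\[
UV - VU = (U-\mathbf{1})(V-\mathbf{1}) - (V-\mathbf{1})(U-\mathbf{1}) \in \mathcal{L}^1,
\]
so $UVU^{-1}V^{-1} - \mathbf{1} = (UV - VU)(VU)^{-1} \in \mathcal{L}^1$ and $\det(UVU^{-1}V^{-1})$ is a standard Fredholm determinant. In particular the images $[U],[V]$ of $U,V$ in $(\mathcal{L}/\mathcal{L}^1)^\times$ commute.

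\textbf{Reduction to $K$-theory.} The Fredholm determinant is a homomorphism $\det \colon (\mathbf{1}+\mathcal{L}^1)\cap\mathcal{L}^\times \to \mathbb{C}^\times$; being valued in an abelian group, it factors through the relative algebraic $K_1$-group $K_1^{\mathrm{alg}}(\mathcal{L},\mathcal{L}^1)$. Since $\mathcal{H}\cong\mathcal{H}\oplus\mathcal{H}$, the Eilenberg swindle forces $K_*^{\mathrm{alg}}(\mathcal{L})=0$, and the localization long exact sequence yields an isomorphism $\partial\colon K_2^{\mathrm{alg}}(\mathcal{L}/\mathcal{L}^1)\xrightarrow{\sim}K_1^{\mathrm{alg}}(\mathcal{L},\mathcal{L}^1)$. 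Because $[U]$ and $[V]$ commute in the quotient, the commutator $UVU^{-1}V^{-1}$ of these lifts is precisely the image under $\partial$ of the Steinberg/Milnor symbol $\{[U],[V]\}\in K_2^{\mathrm{alg}}(\mathcal{L}/\mathcal{L}^1)$. The theorem is thus equivalent to the statement that $\det\circ\partial$ kills this symbol.

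\textbf{The main obstacle.} The hardest step is to actually show that $\det(\partial\{[U],[V]\}) = 1$. I expect this to follow from the bimultiplicativity and Steinberg relation $\{x,\mathbf{1}-x\}=0$ of the symbol, combined with a cyclicity-of-trace computation genuinely requiring the \emph{symmetric} Kitaev hypothesis that both $(U-\mathbf{1})(V-\mathbf{1})$ and $(V-\mathbf{1})(U-\mathbf{1})$ be trace class, not merely their difference. An equivalent, more concrete reformulation is via the Whitehead-type identity
\[
\begin{pmatrix} UVU^{-1}V^{-1} & 0 \\ 0 & \mathbf{1} \end{pmatrix} = \begin{pmatrix} U & 0 \\ 0 & U^{-1} \end{pmatrix}\begin{pmatrix} V & 0 \\ 0 & V^{-1} \end{pmatrix}\begin{pmatrix} (VU)^{-1} & 0 \\ 0 & VU \end{pmatrix}
\]
in $\GL_2(\mathcal{L})$: each diagonal factor is a product of elementary matrices, and one must argue that $\det$ is unchanged under the reassembly of these elementary factors, even though the three diagonal blocks on the right are not themselves trace-class perturbations of $\mathbf{1}$.
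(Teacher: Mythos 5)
Your setup is correct and mirrors the discussion in Section~3.1 of the paper: the Kitaev hypothesis forces $UVU^{-1}V^{-1}$ to be determinant class, the Fredholm determinant descends to a homomorphism on the relative group $K_1(\mathcal{L},\mathcal{L}^1)$, and the class of $UVU^{-1}V^{-1}$ there is the image under the connecting map $\partial$ of the Steinberg symbol $\{u,v\}\in K_2(\mathcal{L}/\mathcal{L}^1)$, where $u,v$ are the (commuting) images of $U,V$. This is the right frame, and it is precisely Brown's observation (Eq.~\eqref{eqn:Brown.observation}).

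However, you then stop at the point where the actual work begins. Your ``main obstacle'' paragraph is a confession, not a proof: you write that you \emph{expect} the vanishing to follow from bimultiplicativity and the relation $\{x,\mathbf{1}-x\}=0$, plus a cyclicity-of-trace argument. None of this is carried out, and as stated it does not work. The Steinberg relation $\{x,\mathbf{1}-x\}=1$ requires $\mathbf{1}-x$ to be a unit, which is not given for $U$ or $V$; bimultiplicativity over commuting units does not by itself trivialize a general symbol; and ``cyclicity of trace'' has no purchase until the symbol has first been brought into the form of a single commutator of elementary matrices whose off-diagonal entries are the operators $U-\mathbf{1}$, $V-\mathbf{1}$ themselves. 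That reduction is exactly where the Kitaev condition must be used in an essential way, and it is the content of the paper's Prop.~\ref{prop:algebraic.step} (abstractly, Theorem~\ref{thm:abstract.algebraic.step}): a careful shuffle of elementary factors, using the Steinberg relations together with the extra $\simeq$-relations \eqref{eqn:Steinberg.Kitaev.extra.identity.2} granted by \eqref{eqn:Kitaev.class}, which shows
\[
\det(UVU^{-1}V^{-1})={\det}^{(2)}\big(e_{12}(U-\mathbf{1})\,e_{21}(V-\mathbf{1})\,e_{12}(\mathbf{1}-U)\,e_{21}(\mathbf{1}-V)\big).
\]
Only after this does the analytic input enter: writing $e_{12}(U-\mathbf{1})=e^A$ and $e_{21}(V-\mathbf{1})=e^B$ with $A,B$ nilpotent, one finds $AB$ and $BA$ trace class (this is where the \emph{symmetric} Kitaev condition is used), so $\mathrm{Tr}[A,B]=0$ by Lidskii, and the Pincus--Helton--Howe formula \eqref{eqn:HH.formula} gives $\det=1$. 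Your concluding ``equivalent reformulation'' via the $2\times 2$ Whitehead identity has the same gap: ``one must argue that $\det$ is unchanged under the reassembly of these elementary factors'' is precisely the nontrivial step, and you do not supply it. So the proposal identifies the correct landscape but is missing the two substantive ingredients: the algebraic reduction to a single elementary commutator, and the Pincus--Helton--Howe/Lidskii computation that finishes it off.
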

This result was first suggested by A.~Kitaev in his influential paper \cite[pp.~56]{Kitaev}, for the purpose of demonstrating the integrality of certain trace formulae appearing in modern quantum physics (see Section \ref{sec:quantized.traces}). We shall therefore refer to \eqref{eqn:Kitaev.class} as the \emph{Kitaev condition} on an invertible pair $U,V\in \mathcal{L}^\times$. 

\medskip
Recall that an operator $T\in\mathcal{L}$ is \emph{trace class} if for an(y) orthonormal basis $\mathcal{B}$, the sum $\sum_{e\in \mathcal{B}}\big\langle e\big||T|e\big\rangle$ is finite. In this case, its \emph{trace}
\[
\mathrm{Tr}(T)=\sum_{e\in \mathcal{B}}\big\langle e\big|Te\big\rangle \in\mathbb{C}
\]
is well-defined, independently of the choice of $\mathcal{B}$. An operator $U\in\mathcal{L}$ is \emph{determinant class} if $U-\mathbf{1}\in \mathcal{L}^1$, in which case, its \emph{(Fredholm) determinant} $\det(U)\in\mathbb{C}$ is well-defined. See \cite[Chapter 3]{Simon} for a detailed treatment. The identity
\[
\det(\exp(T))=\exp(\mathrm{Tr}(T)),\qquad T\in\mathcal{L}^1,
\]
shows that a trivial determinant is intimately related to integrality of a corresponding trace (up to a $2\pi i$ factor).

The Fredholm determinant is conjugation invariant, and is a group homomorphism from the invertible determinant class operators to the non-zero complex numbers. From these algebraic properties alone, it is straightforward to show that $\det(UVU^{-1}V^{-1})=1$ whenever any of $U,V$ or $UV$ is determinant class. Nevertheless, in general, $\det(UVU^{-1}V^{-1})$ can take \emph{any} non-zero value \cite{AV,EF}, and it is closely related to the rich subject of traces of commutators \cite{HH}, as well as $K$-theory \cite{Brown}.
Now, Kitaev's condition \eqref{eqn:Kitaev.class} implies
\begin{equation}\label{eqn:Kitaev.implies.det.class}
\begin{aligned}
UVU^{-1}V^{-1}-\mathbf{1}&=(UV-VU)U^{-1}V^{-1}\\
&=\big((U-\mathbf{1})(V-\mathbf{1})-(V-\mathbf{1})(U-\mathbf{1})\big)U^{-1}V^{-1}\in\mathcal{L}^1,
\end{aligned}
\end{equation}
so that $\det(UVU^{-1}V^{-1})$ is automatically well-defined, but it is not at all obvious that it must be trivial. Furthermore, because Kitaev's condition occurs naturally in geometric and physics problems (see Section \ref{sec:quantized.traces}), the veracity of Theorem \ref{thm:Kitaev.vanishing} is of considerable general interest for applications. 

More than fifteen years after Kitaev's paper \cite{Kitaev} appeared, some progress was made in \cite{EF}, where Theorem \ref{thm:Kitaev.vanishing} was proved under the extra assumption that $(U^*-\mathbf{1})(V-\mathbf{1})$ and $(V-\mathbf{1})(U^*-\mathbf{1})$ are also trace class. Borel functional calculus played a key role in \cite{EF}, and it is unclear whether the techniques there can be improved to address the full \emph{adjoint-free} Theorem \ref{thm:Kitaev.vanishing}. Nevertheless, the result of \cite{EF} shows that there are no easy counterexamples to Theorem \ref{thm:Kitaev.vanishing}.

Our approach to Theorem \ref{thm:Kitaev.vanishing} is completely different, and we believe that it gets to the heart of the problem. Our proof is presented in Section \ref{sec:proof}, and comprises two main steps, both of which use the Kitaev condition \eqref{eqn:Kitaev.class} in crucial ways.
\begin{enumerate}
\item An \emph{algebraic} step, Prop.~\ref{prop:algebraic.step}, simplifies $\det(UVU^{-1}V^{-1})$ to $\det(E_1E_2E_1^{-1}E_2^{-1})$, where $E_1,E_2$ are so-called elementary matrices. This is the most novel part of the proof. Experts in algebraic $K$-theory may notice that the abstract concept of Steinberg symbols in $K_2$-theory is guiding our manipulations. We choose to keep the presentation concrete, and defer the $K$-theoretic interpretation to Section \ref{sec:K.perspective}.
\item An \emph{analytic} step computes $\det(E_1E_2E_1^{-1}E_2^{-1})=1$ using a formula of Pincus--Helton--Howe and Lidskii's trace theorem.
\end{enumerate}
As the algebraic step involves a longer technical computation, we will first present the short analytic step, assuming the validity of the algebraic step, then complete the proof by justifying the latter assumption.

Finally, the physical context for Theorem \ref{thm:Kitaev.vanishing} is discussed in Section \ref{sec:quantized.traces}.

\section{Proof of determinant trivialization theorem}\label{sec:proof}
For each $n\geq 1$, we write $M_n(-)$ for the $n\times n$ matrix ring with entries in $(-)$, and $\GL_n(\mathcal{L})$ for the group of invertibles in $M_n(\mathcal{L})$. We may identify $M_n(\mathcal{L})$ with $\mathcal{L}(\mathcal{H}^{\oplus n})$, and $M_n(\mathcal{L}^1)$ with the ideal $\mathcal{L}^1(\mathcal{H}^{\oplus n})$. Thus, there are Fredholm determinant homomorphisms
\[
{\det}^{(n)}:\{U\in \GL_n(\mathcal{L})\,:\,U-\mathbf{1}^{\oplus n}\in M_n(\mathcal{L}^1)\}\to\mathbb{C}^\times,\qquad n\geq 1,
\]
and they are compatible with $\det=\det^{(1)}$ in the sense that
\begin{equation}\label{eqn.det.stable}
{\det}^{(n)}\begin{pmatrix} U & 0 \\ 0 & \mathbf{1}^{\oplus (n-1)}\end{pmatrix} = {\det}^{(1)}\, U,\qquad n\geq 1.
\end{equation}
By embedding in $n\times n$ matrices, the ``hidden'' invariances of ${\det}^{(n)}$ under conjugation by $\GL_n(\mathcal{L})$ (not just $\GL_1(\mathcal{L})$) becomes available for exploitation.

For $1\leq i\neq j\leq n$, and $T\in\mathcal{L}$, the \emph{elementary matrix} $e_{ij}(T)$ is the $n\times n$ matrix whose $ij$-entry is $T$, diagonal entries are $\mathbf{1}$, and remaining entries are $0$. For example,
\[
e_{21}(T)=\begin{pmatrix} \mathbf{1} & 0 \\ T & \mathbf{1}\end{pmatrix},\qquad e_{13}(T)=\begin{pmatrix} \mathbf{1} & 0 & T \\ 0 & \mathbf{1} & 0 \\ 0 & 0 & \mathbf{1}\end{pmatrix}.
\]
An elementary matrix $e_{ij}(T)$ is determinant class iff $T\in\mathcal{L}^1$, in which case, $\det(e_{ij}(T))=1$.

\begin{proof}[Proof of Theorem \ref{thm:Kitaev.vanishing}]
We will use the following identity
\begin{equation}\label{eqn:algebraic.simplification}
\det(UVU^{-1}V^{-1})={\det}^{(2)}\big(e_{12}(U-\mathbf{1})e_{21}(V-\mathbf{1})e_{12}(\mathbf{1}-U)e_{21}(\mathbf{1}-V)\big),
\end{equation}
valid whenever $U,V\in\mathcal{L}^\times$ satisfy the Kitaev condition \eqref{eqn:Kitaev.class}. The derivation of \eqref{eqn:algebraic.simplification} will be provided in Prop.~\ref{prop:algebraic.step} later.

The elementary matrices appearing on the right side of \eqref{eqn:algebraic.simplification} are exponentials,
\begin{align*}
e_{12}(U-\mathbf{1})&=\begin{pmatrix} \mathbf{1} & 0 \\ 0 & \mathbf{1} \end{pmatrix}+\begin{pmatrix} 0 & U-\mathbf{1} \\ 0 & 0 \end{pmatrix}=e^A, & A:= \begin{pmatrix} 0 & U-\mathbf{1} \\ 0 & 0 \end{pmatrix},\\
e_{21}(V-\mathbf{1})&=\begin{pmatrix} \mathbf{1} & 0 \\ 0 & \mathbf{1} \end{pmatrix}+\begin{pmatrix} 0 & 0 \\ V-\mathbf{1} & 0 \end{pmatrix}=e^B, & B:= \begin{pmatrix} 0 & 0 \\ V-\mathbf{1} & 0 \end{pmatrix}.
\end{align*}
Then \eqref{eqn:algebraic.simplification} becomes
\begin{equation}\label{eqn:algebraic.simplification.and.HH}
\det(UVU^{-1}V^{-1})={\det}^{(2)}\big(e^Ae^Be^{-A}e^{-B}\big)=\exp(\mathrm{Tr}[A,B]),
\end{equation}
where the second equality is the Pincus--Helton--Howe formula (\cite{PincusThesis}, \cite[pp.~182]{HH}), valid provided $[A,B]$ is trace class. The latter trace class membership holds because of the Kitaev condition \eqref{eqn:Kitaev.class} and
\[
AB=\begin{pmatrix} (U-\mathbf{1})(V-\mathbf{1}) & 0 \\ 0 & 0 \end{pmatrix},\qquad BA=\begin{pmatrix} 0 & 0 \\ 0 & (V-\mathbf{1})(U-\mathbf{1}) \end{pmatrix}.
\]
Furthermore, by Lidskii's trace theorem (\cite{Lidskii}, \cite[Cor.~3.8]{Simon}),
\[
\mathrm{Tr}[A,B]=\mathrm{Tr}(AB)-\mathrm{Tr}(BA)=0,
\]
Putting this into \eqref{eqn:algebraic.simplification.and.HH} gives $\det(UVU^{-1}V^{-1})=1$.
\end{proof}

\medskip

The proof of the identity \eqref{eqn:algebraic.simplification} will involve elementary matrix manipulation. Recall that elementary matrices satisfy the following \emph{Steinberg relations}:
\begin{itemize}
\item {[Commutation.]} If the ``inner'' indices are distinct, and the ``outer'' indices are distinct, then
\begin{equation}
e_{ij}(S)e_{l k}(T)= e_{l k}(T)e_{ij}(S),\qquad i\neq k,\;j\neq l. 
\label{eqn:Steinberg.commute.no.inner.outer}
\end{equation}
In the special case $l=i\neq j=k$, a further equality holds,
\begin{equation}\label{eqn:Steinberg.merger}
e_{ij}(S)e_{ij}(T)=e_{ij}(S+T)=e_{ij}(T)e_{ij}(S).
\end{equation}
Note that the inversion formula $e_{ij}(T)^{-1}=e_{ij}(-T)$ follows from \eqref{eqn:Steinberg.merger}.
\item {[Commutator.]} If the ``inner'' indices coincide, \emph{or} the ``outer'' indices coincide (but not both),
\begin{equation}\label{eqn:Steinberg.commutator.123}
\begin{aligned}
e_{ij}(S)e_{jk}(T)&=e_{ik}(ST)\cdot e_{jk}(T)e_{ij}(S),\\
e_{ji}(S)e_{kj}(T)&=e_{ki}(-TS)\cdot e_{kj}(T)e_{ji}(S),\qquad i\neq k.
\end{aligned}
\end{equation}
\end{itemize}

To analyse $\det(UVU^{-1}V^{-1})$ in terms of elementary matrices, we first embed $UVU^{-1}V^{-1}$ into $3\times 3$ matrices to obtain the following identity,
\begin{equation}\label{eqn:mult.commutator.stabilized}
\begin{aligned}
&\begin{pmatrix}UVU^{-1}V^{-1} & 0 & 0 \\ 0 & \mathbf{1} & 0  \\ 0 & 0 & \mathbf{1}\end{pmatrix}
\\
&=
\underbrace{\begin{pmatrix}U & 0 & 0 \\ 0 & \mathbf{1} & 0  \\ 0 & 0 & U^{-1}\end{pmatrix}}_{=:d_{13}(U)}
\underbrace{\begin{pmatrix}V & 0 & 0 \\ 0 & V^{-1} & 0  \\ 0 & 0 & \mathbf{1}\end{pmatrix}}_{=:d_{12}(V)}
\begin{pmatrix}U^{-1} & 0 & 0 \\ 0 & \mathbf{1} & 0  \\ 0 & 0 & U\end{pmatrix}
\begin{pmatrix}V^{-1} & 0 & 0 \\ 0 & V & 0  \\ 0 & 0 & \mathbf{1}\end{pmatrix}.
\end{aligned}
\end{equation}
By a famous observation of Whitehead,
\begin{equation*}
\begin{pmatrix}V & 0 \\ 0 & V^{-1}\end{pmatrix}
=\begin{pmatrix}\mathbf{1} & 0 \\ V^{-1} & \mathbf{1}\end{pmatrix}
\begin{pmatrix}\mathbf{1} & \mathbf{1}-V \\ 0 & \mathbf{1}\end{pmatrix}
\begin{pmatrix}\mathbf{1} & 0 \\ -\mathbf{1} & \mathbf{1}\end{pmatrix}
\begin{pmatrix}\mathbf{1} & \mathbf{1}-V^{-1} \\ 0 & \mathbf{1}\end{pmatrix},
\end{equation*}
so $d_{12}(V)$ is a product of four elementary matrices; similarly for $d_{13}(U)$. Therefore,
\begin{equation}\label{eqn:factorize.to.d.matrices}
\begin{pmatrix}UVU^{-1}V^{-1} & 0 & 0 \\ 0 & \mathbf{1} & 0  \\ 0 & 0 & \mathbf{1}\end{pmatrix}=d_{13}(U)d_{12}(V)d_{13}(U)^{-1}d_{12}(V)^{-1}
\end{equation}
is a product of sixteen elementary matrices (four groups of four), since
\begin{equation}\label{eqn:for.shuffling}
\begin{aligned}
d_{13}(U) & = e_{31}(U^{-1})e_{13}(\mathbf{1}-U)e_{31}(-\mathbf{1})e_{13}(\mathbf{1}-U^{-1}),\\
d_{12}(V) & = e_{21}(V^{-1})e_{12}(\mathbf{1}-V)e_{21}(-\mathbf{1})e_{12}(\mathbf{1}-V^{-1}).
\end{aligned}
\end{equation}

We would like to swap the positions of $d_{13}(U)$ and $d_{12}(V)$ in \eqref{eqn:factorize.to.d.matrices}, in order to cancel $d_{13}(U)^{-1}d_{12}(V)^{-1}$. This entails moving the four elementary matrix factors in $d_{13}(U)$ past those of $d_{12}(V)$. Because of the Steinberg relation \eqref{eqn:Steinberg.commutator.123}, we will pick up a complicated collection of commutator terms of the form $e_{ik}(ST)$ or $e_{ki}(-TS)$. Fortunately, many of these terms will have trivial determinant, so they are ``redundant'' for the purposes of computing the determinant, because of the following easy lemma.
\begin{lem}\label{lem:redundant.elementary}
Let $W,W_0,W^\prime\in \GL_n(\mathcal{L})$ have determinant class product $WW_0W^\prime$. If $W_0$ is determinant class with trivial determinant, then
\[
{\det}^{(n)}(WW_0W^\prime)={\det}^{(n)}(WW^\prime).
\]
\end{lem}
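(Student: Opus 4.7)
The plan is to factor $WW_0W' = (WW_0W^{-1})(WW')$ by inserting $W^{-1}W$, thereby isolating a conjugate of $W_0$, and then to apply multiplicativity and conjugation invariance of the Fredholm determinant on the determinant-class invertibles.

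First, I would verify that both factors on the right are individually determinant class. For $WW_0W^{-1}$, the ideal property of $M_n(\mathcal{L}^1)$ gives $WW_0W^{-1} - \mathbf{1} = W(W_0 - \mathbf{1})W^{-1} \in M_n(\mathcal{L}^1)$, using only that $W_0$ itself is determinant class (implicit in the assumption $\det^{(n)} W_0 = 1$). For $WW'$, I would subtract: $(WW_0W' - \mathbf{1}) - (WW' - \mathbf{1}) = W(W_0 - \mathbf{1})W' \in M_n(\mathcal{L}^1)$, so the hypothesis that $WW_0W'$ is determinant class forces $WW'$ to be determinant class as well.

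Once both factors are in the domain of $\det^{(n)}$, multiplicativity gives $\det^{(n)}(WW_0W') = \det^{(n)}(WW_0W^{-1}) \cdot \det^{(n)}(WW')$. To eliminate the first factor, I would invoke conjugation invariance to conclude $\det^{(n)}(WW_0W^{-1}) = \det^{(n)}(W_0) = 1$. This step is valid even though $W$ need not itself be determinant class, because conjugation by $W \in \GL_n(\mathcal{L})$ preserves the spectrum, and Lidskii's theorem expresses $\det^{(n)}$ as a product of eigenvalues. Combining with the hypothesis on $W_0$ yields the claimed equality.

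I do not anticipate a substantive obstacle here; the argument is essentially a bookkeeping exercise combining the ideal property, multiplicativity, and conjugation invariance. The only minor subtlety is that determinant-classness of the individual factors $WW_0W^{-1}$ and $WW'$ must be deduced from determinant-classness of the three-fold product together with that of $W_0$, but the ideal property dispatches this immediately.
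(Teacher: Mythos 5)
Your proof is correct and uses the same essential ingredients as the paper's --- the ideal property of $M_n(\mathcal{L}^1)$, multiplicativity of $\det^{(n)}$, and its conjugation invariance --- differing only in bookkeeping: you insert $W^{-1}W$ to factor $WW_0W' = (WW_0W^{-1})(WW')$, whereas the paper first cycles to $W_0W'W$ and peels $W_0^{-1}$ off the front. One small remark: conjugation invariance of the Fredholm determinant is more standardly derived from $\det(\mathbf{1}+AB)=\det(\mathbf{1}+BA)$ rather than by appealing to Lidskii and spectral invariance, but your reasoning is sound.
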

\begin{proof}
By conjugation invariance, $W_0W^\prime W$ is determinant class, and has the same determinant as $WW_0W^\prime$. Under the extra assumption that ${\det}^{(n)}(W_0)=1$ is well-defined, $W^\prime W=W_0^{-1}(W_0W^\prime W)$ is determinant class. By multiplicativity and conjugation invariance again, we have
\[
{\det}^{(n)}(WW_0W^\prime)={\det}^{(n)}(W_0W^\prime W)=\cancel{{\det}^{(n)}(W_0)}\cdot{\det}^{(n)}(W^\prime W)={\det}^{(n)}(WW^\prime).
\]
\end{proof}

\begin{prop}\label{prop:algebraic.step}
Let $U,V\in\mathcal{L}^\times$ satisfy the Kitaev condition \eqref{eqn:Kitaev.class}. Then the determinant identity \eqref{eqn:algebraic.simplification} holds.
\end{prop}
\begin{proof}
The Kitaev condition \eqref{eqn:Kitaev.class} ensures that 
$e_{ij}((U-\mathbf{1})(V-\mathbf{1}))$ is determinant class, with trivial determinant. Then, for $i\neq k$,
\begin{equation}\label{eqn:extra.commutativity}
\begin{aligned}
&{\det}^{(3)}\big(W e_{ij}(U-\mathbf{1})e_{jk}(V-\mathbf{1}) W^\prime\big)\\
&={\det}^{(3)}\big(W \underbrace{e_{ik}((U-\mathbf{1})(V-\mathbf{1}))}_{\text{redundant by Lemma}\;\ref{lem:redundant.elementary}}e_{jk}(V-\mathbf{1})e_{ij}(U-\mathbf{1}) W^\prime\big) & (\mathrm{Eq.}\,\eqref{eqn:Steinberg.commutator.123})\\
&={\det}^{(3)}\big(W e_{jk}(V-\mathbf{1})e_{ij}(U-\mathbf{1}) W^\prime\big)
\end{aligned}
\end{equation}
holds whenever $W,W^\prime$ are invertible operators such that the above determinants are defined. Similarly with $U$ and $V$ swapped. 
It will be convenient to write $C \simeq D$ to indicate that $\det(WCW^\prime)=\det(WDW^\prime)$ holds whenever $W,W^\prime$ are invertibles such that $WCW^\prime, WDW^\prime$ are determinant class. With this notation, Eq.~\eqref{eqn:extra.commutativity} becomes the extra ``equality'' \eqref{eqn:Steinberg.Kitaev.extra.commute} below,
\begin{align}
e_{ij}(U-\mathbf{1})e_{jk}(V-\mathbf{1}) &\simeq e_{jk}(V-\mathbf{1})e_{ij}(U-\mathbf{1}), & i\neq k,\label{eqn:Steinberg.Kitaev.extra.commute}
\\
e_{ij}((U-\mathbf{1})V)&\simeq e_{ij}(U-\mathbf{1})\simeq e_{ij}(V(U-\mathbf{1})),\label{eqn:Steinberg.Kitaev.extra.identity.2}
\end{align}
while the extra ``equality'' \eqref{eqn:Steinberg.Kitaev.extra.identity.2} is due to 
\[
(U-\mathbf{1})V=U-\mathbf{1}=V(U-\mathbf{1})
\qquad \mathrm{mod}\;\mathcal{L}^1,\qquad (\text{by Kitaev condition}\;\eqref{eqn:Kitaev.class})
\]
together with \eqref{eqn:Steinberg.merger} and the redundancy Lemma \ref{lem:redundant.elementary}.
Since Kitaev's condition holds with $U,V$ swapped, and/or $U$ replaced by $U^{-1}$ and/or $V$ replaced by $V^{-1}$, the same replacements can be made in the ``equalities'' \eqref{eqn:Steinberg.Kitaev.extra.commute}--\eqref{eqn:Steinberg.Kitaev.extra.identity.2}.

Recall the $3\times 3$ matrix identity \eqref{eqn:factorize.to.d.matrices}, which implies that
\begin{equation}\label{eqn:det.alg.calc.first.factorization}
\det(UVU^{-1}V^{-1})={\det}^{(3)}\big(d_{13}(U)d_{12}(V)d_{13}(U)^{-1}d_{12}(V)^{-1}\big).
\end{equation}
Here, $d_{13}(U)$ and $d_{12}(V)$ are the elementary matrix products given in \eqref{eqn:for.shuffling}.
We will swap $d_{13}(U)$ with $d_{12}(V)$, following the Steinberg relations \eqref{eqn:Steinberg.commute.no.inner.outer}, \eqref{eqn:Steinberg.merger}, \eqref{eqn:Steinberg.commutator.123}, and the extra $\simeq$ ``equalities'' \eqref{eqn:Steinberg.Kitaev.extra.commute}, \eqref{eqn:Steinberg.Kitaev.extra.identity.2}. 
The first step is to move the factor $\boxed{e_{13}(\mathbf{1}-U^{-1})}$ in $d_{13}(U) = \boxed{e_{31}(U^{-1})} \boxed{e_{13}(\mathbf{1}-U)} \boxed{e_{31}(-\mathbf{1})} \boxed{e_{13}(\mathbf{1}-U^{-1})}$
to the right of $d_{12}(V)$. To aid the reader, we underline the extra commutator terms that arise during this step,
\begin{align*}
&d_{13}(U)d_{12}(V)d_{13}(U)^{-1}d_{12}(V)^{-1}\\
&=\boxed{e_{31}(U^{-1})} \boxed{e_{13}(\mathbf{1}-U)} \boxed{e_{31}(-\mathbf{1})} \boxed{e_{13}(\mathbf{1}-U^{-1})}  \\
&\qquad e_{21}(V^{-1}) e_{12}(\mathbf{1}-V) e_{21}(-\mathbf{1}) e_{12}(\mathbf{1}-V^{-1}) \cdot d_{13}(U)^{-1} d_{12}(V)^{-1}\\
&\simeq \boxed{e_{31}(U^{-1})} \boxed{e_{13}(\mathbf{1}-U)} \boxed{e_{31}(-\mathbf{1})} \underline{ e_{23}\big(-V^{-1}(\mathbf{1}-U^{-1})\big)}  \\
&\qquad e_{21}(V^{-1}) \boxed{e_{13}(\mathbf{1}-U^{-1})} e_{12}(\mathbf{1}-V) e_{21}(-\mathbf{1}) e_{12}(\mathbf{1}-V^{-1})\\
&\qquad \cdot d_{13}(U)^{-1}d_{12}(V)^{-1} & ({\rm by}\;\eqref{eqn:Steinberg.commutator.123})\\
&\simeq \boxed{e_{31}(U^{-1})} \boxed{e_{13}(\mathbf{1}-U)} \boxed{e_{31}(-\mathbf{1})} \underline{ e_{23}(U^{-1}-\mathbf{1})}  \\
&\qquad e_{21}(V^{-1}) e_{12}(\mathbf{1}-V) \boxed{e_{13}(\mathbf{1}-U^{-1})} e_{21}(-\mathbf{1}) e_{12}(\mathbf{1}-V^{-1})\\
&\qquad \cdot d_{13}(U)^{-1}d_{12}(V)^{-1} & ({\rm by}\;\eqref{eqn:Steinberg.Kitaev.extra.identity.2}, \eqref{eqn:Steinberg.commute.no.inner.outer})\\
&\simeq \boxed{e_{31}(U^{-1})} \boxed{e_{13}(\mathbf{1}-U)} \boxed{e_{31}(-\mathbf{1})} \cancel{\underline{ e_{23}(U^{-1}-\mathbf{1})}}  \\
&\qquad e_{21}(V^{-1}) e_{12}(\mathbf{1}-V) \cancel{\underline{e_{23}(\mathbf{1}-U^{-1})}} e_{21}(-\mathbf{1}) e_{12}(\mathbf{1}-V^{-1}) \\
&\qquad \boxed{e_{13}(\mathbf{1}-U^{-1})} \cdot d_{13}(U)^{-1}d_{12}(V)^{-1}. & ({\rm by}\;\eqref{eqn:Steinberg.commutator.123}, \eqref{eqn:Steinberg.commute.no.inner.outer})
\end{align*}
The last cancellation holds because $e_{23}(U^{-1}-\mathbf{1})$ can be commuted past $e_{21}(-\mathbf{1}) e_{12}(\mathbf{1}-V^{-1})$ due to \eqref{eqn:Steinberg.commute.no.inner.outer} and \eqref{eqn:Steinberg.Kitaev.extra.commute}, whence it cancels with $e_{23}(\mathbf{1}-U^{-1})$.

We shall be brief about the next three steps, where the remaining three factors $\boxed{e_{31}(-\mathbf{1})}$, $\boxed{e_{13}(\mathbf{1}-U)}$, $\boxed{e_{31}(U^{-1})}$ of $d_{13}(U)$ are successively moved to the right,
\begin{align*}
&d_{13}(U)d_{12}(V)d_{13}(U)^{-1}d_{12}(V)^{-1}\\
&\simeq \boxed{e_{31}(U^{-1})} \boxed{e_{13}(\mathbf{1}-U)} e_{21}(V^{-1}) \underline{e_{32}(V-\mathbf{1})} e_{12}(\mathbf{1}-V) e_{21}(-\mathbf{1})\\
&\qquad  \underline{e_{32}(V^{-1}-\mathbf{1})} e_{12}(\mathbf{1}-V^{-1}) \boxed{e_{31}(-\mathbf{1})} \boxed{e_{13}(\mathbf{1}-U^{-1})}\cdot d_{13}(U)^{-1}d_{12}(V)^{-1}\\
&\simeq \boxed{e_{31}(U^{-1})} \underline{e_{23}(U-\mathbf{1})} e_{21}(V^{-1}) \underline{e_{32}(V-\mathbf{1})} e_{12}(\mathbf{1}-V) \underline{e_{23}(\mathbf{1}-U)} e_{21}(-\mathbf{1})\\
&\qquad  \underline{e_{32}(V^{-1}-\mathbf{1})} e_{12}(\mathbf{1}-V^{-1}) \boxed{e_{13}(\mathbf{1}-U)} \boxed{e_{31}(-\mathbf{1})} \boxed{e_{13}(\mathbf{1}-U^{-1})}\\
&\qquad\cdot d_{13}(U)^{-1}d_{12}(V)^{-1}\\
&\simeq \underline{e_{21}(U^{-1}-\mathbf{1})} \underline{e_{23}(U-\mathbf{1})} e_{21}(V^{-1}) \cancel{\underline{e_{32}(V-\mathbf{1})} \underline{e_{32}(\mathbf{1}-V)}} e_{12}(\mathbf{1}-V) \\
&\qquad  \underline{e_{21}(\mathbf{1}-U^{-1})} \underline{e_{23}(\mathbf{1}-U)} e_{21}(-\mathbf{1})\cancel{\underline{e_{32}(V^{-1}-\mathbf{1})} \underline{e_{32}(\mathbf{1}-V^{-1})}} e_{12}(\mathbf{1}-V^{-1}) \\
&\qquad\underbrace{\boxed{e_{31}(U^{-1})} \boxed{e_{13}(\mathbf{1}-U)} \boxed{e_{31}(-\mathbf{1})} \boxed{e_{13}(\mathbf{1}-U^{-1})}}_{\cancel{d_{13}(U)}}\cdot \cancel{d_{13}(U)^{-1}}d_{12}(V)^{-1}\\
&\simeq \underline{e_{21}(U^{-1}-\mathbf{1})} e_{21}(V^{-1}) e_{12}(\mathbf{1}-V) \\
&\qquad  \underline{e_{21}(\mathbf{1}-U^{-1})} \cancel{\underline{e_{23}(U-\mathbf{1})}\underline{e_{23}(\mathbf{1}-U)}} e_{21}(-\mathbf{1}) e_{12}(\mathbf{1}-V^{-1})\cdot d_{12}(V)^{-1}\\
&= e_{21}(V^{-1})\underline{e_{21}(U^{-1}-\mathbf{1})}e_{12}(\mathbf{1}-V) \underline{e_{21}(\mathbf{1}-U^{-1})}e_{12}(V-\mathbf{1})e_{21}(-V^{-1}),
\end{align*}
where in the last equality, we substituted \eqref{eqn:for.shuffling} for $d_{12}(V)$.
Taking determinants, invoking \eqref{eqn:det.alg.calc.first.factorization}, and using cyclicity, we arrive at
\begin{equation}\label{eqn:algebraic.prop.last.moves}
\det(UVU^{-1}V^{-1})={\det}^{(2)}\big(e_{12}(V-\mathbf{1}) e_{21}(U^{-1}-\mathbf{1}) e_{12}(\mathbf{1}-V) e_{21}(\mathbf{1}-U^{-1}) \big).
\end{equation}
Swapping $(U,V)$ with $(V^{-1},U)$, we also have
\[
\det(V^{-1}UVU^{-1})={\det}^{(2)}\big(e_{12}(U-\mathbf{1}) e_{21}(V-\mathbf{1}) e_{12}(\mathbf{1}-U) e_{21}(\mathbf{1}-V)\big).
\]
Since $\det(UVU^{-1}V^{-1})=\det(V^{-1}UVU^{-1})$ by cyclicity, the identity \eqref{eqn:algebraic.simplification} follows.
\end{proof}

\section{Discussion}
\subsection{$K$-theory perspective}\label{sec:K.perspective}
A pedagogical treatment of algebraic $K_2$-theory can be found in \cite[Chapter 4]{Rosenberg}. Below, we explain how the algebraic step in our proof (Prop.~\ref{prop:algebraic.step}) is related to Steinberg symbols in $K_2$-theory. 

For a unital ring $R$ with an ideal $I$, we write $\GL_n(R)$ for the group of invertibles in $M_n(R)$, and $\GL_n(R,I)\subseteq \GL_n(R)$ for the kernel of the induced quotient homomorphism $\GL_n(R)\twoheadrightarrow\GL_n(R/I)$. In the special case
\[
R=\mathcal{L}=\mathcal{L}(\mathcal{H}),\qquad I=\mathcal{L}^1=\mathcal{L}^1(\mathcal{H}), 
\]
$\GL_n(\mathcal{L},\mathcal{L}^1)$ is precisely the group of invertible determinant class operators on $\mathcal{H}^{\oplus n}$, and we have a homomorphism $\det^{(n)}:\GL_n(\mathcal{L},\mathcal{L}^1)\to \mathbb{C}^\times$. By the stability \eqref{eqn.det.stable}, we have a homomorphism
\[
\widetilde{\det}:\GL(\mathcal{L},\mathcal{L}^1):=\bigcup_{n\geq 1}\GL_n(\mathcal{L},\mathcal{L}^1)\to\mathbb{C}^\times.
\]
Because $\widetilde{\det}$ is trivial on elementary matrices of the form $e_{ij}(T), T\in \mathcal{L}^1$, and is invariant under conjugation, it vanishes on the normal subgroup $\mathrm{E}(\mathcal{L},\mathcal{L}^1)$ generated by 
$\{e_{ij}(T)\,:\,T\in \mathcal{L}^1\}$. So $\widetilde{\det}$ descends to a homomorphism on the \emph{relative} $K_1$-group,
\[
\check{\det}:K_1(\mathcal{L},\mathcal{L}^1):=\GL(\mathcal{L},\mathcal{L}^1)/\mathrm{E}(\mathcal{L},\mathcal{L}^1)\to\mathbb{C}^\times.
\]
Thus, the determinant of a determinant class commutator $UVU^{-1}V^{-1}$ depends only on the class $[UVU^{-1}V^{-1}]_{K_1(\mathcal{L},\mathcal{L}^1)}$.

At the same time, it is easy to see that $\det(UVU^{-1}V^{-1})$ depends only on the images $u,v$ of $U,V$ in the quotient ring $\mathcal{L}/\mathcal{L}^1$. Furthermore,
\[
UVU^{-1}V^{-1}-\mathbf{1}=(UV-VU)U^{-1}V^{-1}\in \mathcal{L}^1,
\]
so $UV-VU$ is trace class, thus $u,v$ commute in $\mathcal{L}/\mathcal{L}^1$. Therefore, there should be some invariant $\{u,v\}$ of the \emph{commuting units} $u,v\in \mathcal{L}/\mathcal{L}^1$, which controls $[UVU^{-1}V^{-1}]_{K_1(\mathcal{L},\mathcal{L}^1)}$ and thus $\det(UVU^{-1}V^{-1})$. This invariant is precisely the \emph{Steinberg symbol} of $u,v$, whose general definition is recalled next.

Let $S$ be a unital ring. There is a notion of elementary matrices $e_{ij}(s), s\in S, 1\leq i\neq j\leq \infty$, and they generate a subgroup $\mathrm{E}(S)\subseteq \GL(S)$. The elementary matrices $e_{ij}(s)$ satisfy the general Steinberg relations \eqref{eqn:Steinberg.commute.no.inner.outer}, \eqref{eqn:Steinberg.merger}, \eqref{eqn:Steinberg.commutator.123}, as well as some other $S$-dependent matrix relations. The \emph{Steinberg group} $\mathrm{St}(S)$ is the free group generated by abstract \emph{Steinberg symbols} $x_{ij}(s)$, $s\in S$, $1\leq i\neq j< \infty$, subject only to the Steinberg relations \eqref{eqn:Steinberg.commute.no.inner.outer}, \eqref{eqn:Steinberg.merger}, \eqref{eqn:Steinberg.commutator.123}. So there is a surjective group homomorphism $\varphi_S:\mathrm{St}(S)\to \mathrm{E}(S)$ mapping $x_{ij}(s)\mapsto e_{ij}(s)$. By definition, $K_2(S)=\ker \varphi_S$, and conceptually, $K_2(S)$ encodes the extra ``non-obvious'' relations satisfied by the $e_{ij}(s)$ other than the ``obvious'' Steinberg relations.

For commuting units $u,v\in S$, consider the product $d_{13}(u)d_{12}(v)d_{13}(u)^{-1}d_{12}(v)^{-1}$ of diagonal matrices, as in \eqref{eqn:mult.commutator.stabilized}, and recall that $d_{13}(u)$ and $d_{12}(v)$ are themselves products of elementary matrices by Whitehead's lemma. The \emph{Steinberg symbol} of $u,v$ is defined as
\begin{equation}\label{eqn:Steinberg.symbol.definition}
\{u,v\}=D_{13}(u)D_{12}(v)D_{13}(u)^{-1}D_{12}(v)^{-1}\in K_2(S),
\end{equation}
where $D_{13}(u), D_{12}(v)\in\mathrm{St}(S)$ are respective lifts of $d_{13}(u), d_{12}(v)\in \mathrm{E}(S)$. Here, $\{u,v\}$ lands in $K_2(S)=\ker \varphi_S$, because of commutativity of $u,v$.
For $S=R/I$ a quotient ring, general $K$-theory yields a connecting homomorphism
\begin{align*}
\partial:K_2(R/I)&\to K_1(R,I)\\
x_{i_1j_1}(s_1)\cdots x_{i_mj_m}(s_m) &\mapsto \big[e_{i_1j_1}(\tilde{s}_1)\cdots e_{i_mj_m}(\tilde{s}_m)\big]_{K_1(R,I)},
\end{align*}
where $\tilde{s}_k\in R$ are any choices of lifts of $s_k\in R/I$; see \cite[Chapter 4.3]{Rosenberg} for details.

Now specialize to the case $R=\mathcal{L}$ and $I=\mathcal{L}^1$. For $U,V\in \mathcal{L}^\times$ with determinant class $UVU^{-1}V^{-1}$, the above general definitions give
\begin{equation}\label{eqn:Brown.observation}
\begin{aligned}
\check{\det}\circ\partial\{u,v\}
&=\check{\det}\big[d_{13}(U)d_{12}(V)d_{13}(U)^{-1}d_{12}(V)^{-1}\big]_{K_1(\mathcal{L},\mathcal{L}^1)}\\
&=\check{\det}\left[\begin{pmatrix}UVU^{-1}V^{-1} & 0 & 0 \\ 0 & \mathbf{1} & 0 \\ 0 & 0 & \mathbf{1}\end{pmatrix}\right]_{K_1(\mathcal{L},\mathcal{L}^1)}\\
&=\det(UVU^{-1}V^{-1}).
\end{aligned}
\end{equation}
This $K$-theoretic nature of $\det(UVU^{-1}V^{-1})$ was first observed by Brown \cite{Brown}. 

\medskip
A minor translation of the calculation in our Prop.~\ref{prop:algebraic.step} leads to an analogous abstract Steinberg symbol formula, which may be of independent interest to algebraists.
\begin{thm}\label{thm:abstract.algebraic.step}
Let $u,v$ be units in a unital ring $S$. If
\begin{equation}\label{eqn:abstract.Kitaev.condition}
(u-1)(v-1)=0=(v-1)(u-1),
\end{equation}
then $\{u,v\}$ is a commutator of Steinberg group generators,
\begin{equation}\label{eqn:Steinberg.Kitaev.abstract.simplification}
\{u,v\}=x_{12}(u-1)x_{21}(v-1)x_{12}(1-u)x_{21}(1-v).
\end{equation}
\end{thm}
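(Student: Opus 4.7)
The plan is to translate the proof of Proposition \ref{prop:algebraic.step} verbatim into the Steinberg group $\mathrm{St}(S)$, replacing each elementary matrix $e_{ij}(\cdot)$ by its abstract Steinberg generator $x_{ij}(\cdot)$. I would first choose lifts $D_{12}(u), D_{13}(v) \in \mathrm{St}(S)$ by applying the Whitehead formula \eqref{eqn:Whitehead.big.UV} verbatim, and use these in the definition \eqref{eqn:Steinberg.symbol.definition} of $\{u,v\}$. Note that \eqref{eqn:abstract.Kitaev.condition} expands to $uv - u - v + 1 = 0 = vu - u - v + 1$, whence $uv = vu$, so $u, v$ commute in $S$ and $\{u,v\}$ is a bona fide element of $K_2(S)$.

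The key observation driving the proof is that every instance of the ``equal up to determinant'' relation $\simeq$ used in Proposition \ref{prop:algebraic.step} can be promoted to a \emph{strict} identity in $\mathrm{St}(S)$. The $\simeq$-moves listed in \eqref{eqn:Steinberg.Kitaev.extra.identity.2} arose from (i) absorbing commutator terms $e_{ik}((u-1)(v-1))$ and $e_{ik}((v-1)(u-1))$ via Lemma \ref{lem:redundant.elementary} because their entries were trace class, and (ii) congruences such as $(u-1)v \equiv u-1 \equiv v(u-1) \pmod{\mathcal{L}^1}$. Under the abstract hypothesis \eqref{eqn:abstract.Kitaev.condition}, both $(u-1)(v-1)$ and $(v-1)(u-1)$ are literally zero, so the corresponding Steinberg generators collapse to $x_{ik}(0) = 1$ via \eqref{eqn:Steinberg.merger}; and identities like $(u-1)v = u-1 = v(u-1)$ hold on the nose in $S$. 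Each $\simeq$-step of Prop.~\ref{prop:algebraic.step} therefore becomes a genuine consequence of the Steinberg relations \eqref{eqn:Steinberg.commute.no.inner.outer}--\eqref{eqn:Steinberg.commutator.123} alone.

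Running the shuffling argument word-for-word in $\mathrm{St}(S)$ then yields the Steinberg-group lift of \eqref{eqn:algebraic.prop.last.moves},
\[
D_{12}(u)\, D_{13}(v)\, D_{12}(u)^{-1}\, D_{13}(v)^{-1} \;=\; x_{32}(1-u^{-1})\, x_{23}(v-1)\, x_{32}(u^{-1}-1)\, x_{23}(1-v).
\]
To convert this into the form \eqref{eqn:Steinberg.Kitaev.abstract.simplification}, I would execute the two clean-up moves from the end of Prop.~\ref{prop:algebraic.step} at the Steinberg-group level: cyclic rotation of the multiplicative commutator (implementable by conjugation in $\mathrm{St}(S)$, which is an identity inside the central subgroup $K_2(S)$), and relabelling the triple of row/column indices $\{1,2,3\}$ (implementable by conjugation by Weyl-element lifts $w_{ij}(1) = x_{ij}(1)\, x_{ji}(-1)\, x_{ij}(1)$).

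The step I expect to demand the most care is this final cyclicity-and-permutation clean-up: in the analytic setting these moves used cyclicity of the trace and conjugation-invariance of the determinant, both of which are unavailable in a general ring and must be replaced by standard but slightly fiddly $K_2$-theoretic identities. As an internal consistency check on \eqref{eqn:Steinberg.Kitaev.abstract.simplification}, one can compute directly that the $2\times 2$ matrix product $e_{12}(u-1)\, e_{21}(v-1)\, e_{12}(1-u)\, e_{21}(1-v)$, simplified entrywise using $(u-1)(v-1) = (v-1)(u-1) = 0$, collapses to the identity matrix; so the right-hand side of \eqref{eqn:Steinberg.Kitaev.abstract.simplification} does land in $K_2(S) = \ker\varphi_S$, confirming that the proposed identity makes sense.
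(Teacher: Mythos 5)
Your proposal is correct and follows essentially the same route as the paper: translate the shuffle of Proposition~\ref{prop:algebraic.step} verbatim into $\mathrm{St}(S)$, where the abstract Kitaev condition~\eqref{eqn:abstract.Kitaev.condition} upgrades every $\simeq$-relation of~\eqref{eqn:Steinberg.Kitaev.extra.identity.2} to a strict Steinberg-group identity (the would-be commutator terms $x_{ik}((u-1)(v-1))$, etc., collapse to $1$), and then pass from the resulting $x_{32}/x_{23}$ formula to~\eqref{eqn:Steinberg.Kitaev.abstract.simplification} using centrality of $K_2(S)$ in $\mathrm{St}(S)$ together with the freedom to permute indices. The paper's only cosmetic difference is that it packages your cyclic-rotation step as the standard symbol identity $\{u,v\}=\{v,u^{-1}\}$; your closing matrix-level check that $e_{12}(u-1)e_{21}(v-1)e_{12}(1-u)e_{21}(1-v)$ reduces to the identity matrix is a sound sanity check not in the paper.
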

\begin{proof}
Condition \eqref{eqn:abstract.Kitaev.condition} implies commutativity of $u,v$, so the Steinberg symbol $\{u,v\}\in K_2(S)$ is well-defined. 

We had used Whitehead's lemma to factorize $d_{13}(U)d_{12}(V)d_{13}(U)^{-1}d_{12}(V)^{-1}$ into a product of elementary matrices, see Eq.~\eqref{eqn:for.shuffling}. The Steinberg symbol $\{u,v\}=D_{13}(u)D_{12}(v)D_{13}(u)^{-1}D_{12}(v)^{-1}$, Eq.~\eqref{eqn:Steinberg.symbol.definition}, has precisely the same factorization, but with Steinberg group generators $x_{ij}(-)$ instead of elementary matrices $e_{ij}(-)$, and $u,v$ instead of $U,V$. Besides the Steinberg relations, the $x_{ij}(-)$ enjoy some extra relations due to the condition \eqref{eqn:abstract.Kitaev.condition}. Specifically, these extra relations are \eqref{eqn:Steinberg.Kitaev.extra.commute}--\eqref{eqn:Steinberg.Kitaev.extra.identity.2}, but with genuine \emph{equality} rather than $\simeq$, and $x_{ij}(-)$ instead of $e_{ij}(-)$. 

As in the proof of Prop.~\ref{prop:algebraic.step}, we wish to move the $x_{ij}(-)$ factors in $D_{13}(u)$ past those of $D_{12}(v)$. The calculation proceeds in exactly the same way (but with equality instead of $\simeq$). The analogue of cyclicity of determinant is the centrality of $K_2(S)$ in the Steinberg group $\mathrm{St}(S)$ (\cite[Theorem 4.2.4]{Rosenberg}). Thus, the result \eqref{eqn:algebraic.prop.last.moves} there translates directly to the equality
\[
\{u,v\}=x_{12}(v-1) x_{21}(u^{-1}-1) x_{12}(1-v) x_{21}(1-u^{-1}).
\]
Using the identity $\{u,v\}=\{v^{-1},u\}$, we arrive at the equivalent formula \eqref{eqn:Steinberg.Kitaev.abstract.simplification}.
\end{proof}

\begin{rem}
It is known that $K_1(\mathcal{L},\mathcal{L}^1)\cong \mathcal{V}\oplus \mathbb{C}^\times$, where $\mathcal{V}$ is the abelian group underlying a vector space of uncountable dimension \cite{AV}. So the determinant only detects a small part of $K_1(\mathcal{L},\mathcal{L}^1)$. In view of formula \eqref{eqn:Brown.observation} and Theorem \ref{thm:Kitaev.vanishing}, one might speculate that the Kitaev condition \eqref{eqn:Kitaev.class} already implies triviality of $\{u,v\}\in K_2(\mathcal{L}/\mathcal{L}^1)$, or of $[UVU^{-1}V^{-1}]\in K_1(\mathcal{L},\mathcal{L}^1)$. We defer such questions to future work. 
\end{rem}

\subsection{Application to quantized traces}\label{sec:quantized.traces}\label{sec:application.quantized.traces}
Let $A,B\in\mathcal{L}$, and suppose 
\begin{equation}\label{eqn:quantized.trace.hypothesis}
[A,B]\in \mathcal{L}^1\;\;\mathrm{and}\;\;(e^A-\mathbf{1})(e^B-\mathbf{1})\in\mathcal{L}^1.
\end{equation}
Then, as in \eqref{eqn:Kitaev.implies.det.class}, $(e^B-\mathbf{1})(e^A-\mathbf{1})$ is trace class as well. Thus, $e^A,e^B$ satisfy Kitaev's condition \eqref{eqn:Kitaev.class}, so Theorem \ref{thm:Kitaev.vanishing} guarantees that $\det(e^Ae^Be^{-A}e^{-B})=1$. By the Pincus--Helton--Howe formula \eqref{eqn:algebraic.simplification.and.HH}, we learn that the condition \eqref{eqn:quantized.trace.hypothesis} implies the \emph{trace quantization},
\[
\frac{1}{2\pi i}\mathrm{Tr}[A,B]\in\mathbb{Z}.
\]

The conditions of \eqref{eqn:quantized.trace.hypothesis} are naturally satisfied in the following physical situation. Suppose $P$ is a spectral projection for a compact isolated part of the spectrum of some Schr\"{o}dinger operator on $\mathcal{H}=L^2(\mathbb{R}^2)$. Let $X$ and $Y$ be multiplication operators by the indicator functions on the right half-plane and the upper half-plane, respectively. Under mild assumptions, $P$ is a locally trace class operator with integral kernel which decays rapidly away from the diagonal, leading to $[PXP,PYP]$ being trace class (see \cite{Thiang, LT2023}). Consider
\begin{equation*}
A=2\pi i PXP,\qquad B=2\pi i PYP.
\end{equation*}
The holomorphic function $\phi(z)=\exp(2\pi i z)-1$ factorizes as
\[
\phi(z)=z(1-z)\psi(z)=\psi(z)(1-z)z,
\]
where $\psi$ is some other holomorphic function. Therefore,
\begin{align*}
(e^A-\mathbf{1})(e^B-\mathbf{1})&=\phi(PXP)\phi(PYP)\\
&=\psi(PXP)(PXP(\mathbf{1}-PXP))(PYP(\mathbf{1}-PYP))\psi(PYP)\\
&=\psi(PXP)\underbrace{(PXP(\mathbf{1}-X)P)(PYP(\mathbf{1}-Y)P)}_{\in\mathcal{L}^1}\psi(PYP),
\end{align*}
where the trace class membership in the last line is deduced from the fact that the supports of $X,(\mathbf{1}-X),Y,(\mathbf{1}-Y)$ intersect at a point, and the above-mentioned properties of $P$ (see \cite{LT2023} for details). Therefore, the condition \eqref{eqn:quantized.trace.hypothesis} holds, and it implies that
\[
\frac{1}{2\pi i}\mathrm{Tr}[A,B]=2\pi i\cdot\mathrm{Tr}[PXP,PYP]\in\mathbb{Z}.
\]
Examples of $P$ for which the above integer is nonzero are known --- they arise from coarse index theory \cite{LT2023, LTfinite}, and the exactly quantized Hall conductance in physics \cite{Thiang}.

\section*{Acknowledgements}
We thank X.~Tang for his suggestion to investigate Steinberg symbols, as well as M.~Ludewig and J.~Rosenberg for helpful discussions.

\end{document}